\documentclass[11pt]{amsart} 

\usepackage[utf8]{inputenc} 
\usepackage{amsmath}
\usepackage{amssymb}
\usepackage{mathrsfs}
\usepackage{mathtools}
\usepackage{mdwlist}
\usepackage{array}
\usepackage{url}

\def\ditemfirst#1#2{\begin{enumerate}\item \label{#1} #2\suspend{enumerate}}

\newtheorem{lem}{Lemma}

\newtheorem{thm}{Theorem}
\newtheorem{cor}{Corollary}

\usepackage{graphicx} 

\usepackage{array} 

\def\<{\langle}
\def\>{\rangle}

\def\R{\mathbb R}

\title{A simple integral representation of single-event scoring rules}
\author{Alexander R. Pruss}
\address{
Alexander R. Pruss
Department of Philosophy, Baylor University
}

\begin{document}
\begin{abstract}
A simple integral representation involving no derivatives or continuity assumptions is given for
proper single-event scoring rules.
\end{abstract}
\sloppy

\maketitle
A proper (accuracy, single-event) scoring rule is a pair of functions $(T,F)$ from $[0,1]$ to $[-\infty,0)$
that are finite on $(0,1)$ and satisfy the propriety inequality for all $p,q\in [0,1]$
$$
	p T(p) + (1-p) F(p) \ge p T(q) + (1-p) F(q)
$$
with the convention that $0 \cdot (-\infty)=0$~\cite{GR}. It is well-known that this
implies that $T$ and $F$ are monotone non-decreasing and non-increasing, respectively~\cite{Schervish}.

A number of representations of scoring rules are known.
For instance, every proper scoring rule is of the form:
$$
	T(p) = G(p)+(1-p)G'(p)
$$
and
$$
	F(p) = G(p)-pG'(p)
$$
for a convex function $G$ with a subgradient $G'$~\cite[p.~343]{GR}. Likewise, Schervish has given an integral representation of
left-continuous proper scoring rules~\cite{Schervish} in terms of a measure on $[0,1]$, and a generalization to all
proper scoring scoring rules in terms of a countable sum of integrals. 

The purpose of this note is to provide a simple integral representation that does not involve any continuity, and that represents $F$ in terms of $T$.

\begin{thm}\label{thm:repr}
Let $T:[0,1]\to[-\infty,\infty)$ be monotonic non-decreasing and finite on $(0,1)$.
Fix any real constant $C$ and any $c\ge 0$. Let
$$
	F(x) = C-\frac{xT(x)}{1-x}+\int_{1/2}^x \frac{T(u)}{(1-u)^2} \, du,
$$
for $x<1$. If $T$ is continuous at $1$, let $F(1)=-c+\lim_{x\to 1-} F(x)$, and otherwise
let $F(1)=-\infty$. Then the limit is defined if $T$ is continuous at $1$, and in any case $(T,F)$ is proper.

Conversely, any function $F$ on $[0,1]$ that is finite except perhaps at $1$ and such that
$(T,F)$ is proper satisfies the above definition for some real $C$ and $c\ge 0$. 
\end{thm}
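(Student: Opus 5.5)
The plan is to reduce everything to one algebraic identity for the ``propriety gap''. For $p,q\in[0,1)$ set
$$
	\Delta(p,q)=p\bigl(T(p)-T(q)\bigr)+(1-p)\bigl(F(p)-F(q)\bigr).
$$
With $F$ as in the statement, I would substitute the formula for $F(p)-F(q)$ and use
$$
	\int_q^p \frac{du}{(1-u)^2}=\frac{p-q}{(1-p)(1-q)}.
$$
A direct computation should collapse the non-integral terms into $T(q)\,(q-p)/(1-q)$, giving
$$
	\Delta(p,q)=(1-p)\int_q^p \frac{T(u)-T(q)}{(1-u)^2}\,du .
$$
This is $\ge 0$ whether $p>q$ or $p<q$, because $T$ is non-decreasing. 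So propriety holds on $[0,1)^2$, in particular on $(0,1)^2$.

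At $0$, where $T(0)$ may be $-\infty$, I would treat the integral as an improper integral and pass to limits in the identity. I would also check that $F(0)$ is well defined and obey the convention $0\cdot(-\infty)=0$. I flag this as a point needing care: it requires $\int_0^{1/2}T$ to be finite, or at least the extended-value conventions to be handled consistently.

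Next I would handle the endpoint $1$. The identity shows $F$ is non-increasing on $[0,1)$, since $\Delta(0,q)\ge 0$ and, more generally, propriety forces monotonicity. Hence $\lim_{x\to1-}F(x)$ exists in $[-\infty,\infty)$, which gives the claimed well-definedness of the limit.

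The case $p=1$ is trivial: the inequality reduces to $T(1)\ge T(q)$. For $q=1$ and $p<1$ I need
$$
	pT(p)+(1-p)F(p)\ge pT(1)+(1-p)F(1).
$$
If $T$ is discontinuous at $1$, then $F(1)=-\infty$ and this is immediate. If $T$ is continuous at $1$, then $T(1)$ is finite, being a limit of finite values bounded below. I would let $q\to1-$ in the inequality already proved on $[0,1)$, and use $T(q)\to T(1)$ together with $F(1)=-c+\lim F\le\lim F$.

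For the converse, I would write propriety at $(p,q)$ and at $(q,p)$ with $p<q$ in $(0,1)$. This yields the sandwich
$$
	-\frac{q}{1-q}\bigl(T(q)-T(p)\bigr)\le F(q)-F(p)\le -\frac{p}{1-p}\bigl(T(q)-T(p)\bigr).
$$
Summing over a partition of $[1/2,x]$ and refining, this shows that $F(x)-F(1/2)$ equals the Riemann--Stieltjes integral $-\int_{1/2}^x \frac{u}{1-u}\,dT(u)$. The integral exists because the integrand is continuous and $T$ is monotone, and the upper and lower sums squeeze $F$.

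Integrating by parts, $-\int \frac{u}{1-u}\,dT=-\frac{uT}{1-u}\Big|+\int \frac{T}{(1-u)^2}\,du$, which gives the formula for $x\in(0,1)$ with some constant $C$. The value at $0$ then follows by letting $x\to 0+$ in the same sandwich.

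At $1$, suppose $T$ jumps at $1$, i.e.\ $T(1)>\lim T(p)$. Then propriety at $(p,1)$ gives $p(T(1)-T(p))\le(1-p)(F(p)-F(1))$. If $F(1)$ were finite, the right side would tend to $0$ while the left side stays bounded below by a positive number, so $F(1)=-\infty$. If $T$ is continuous at $1$, monotonicity of $F$ gives $F(1)\le\lim F$, so $c:=\lim F-F(1)\ge0$.

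I expect the main obstacle to be the boundary bookkeeping at $0$ and $1$ with infinite values, rather than the interior. The interior is fully handled by the displayed identity and the Stieltjes squeeze. At the boundary I would argue by limits from the interior, using the monotonicity of $F$ and $T$ each time.
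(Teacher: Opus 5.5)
Your argument is correct wherever the statement itself is true (see the second paragraph), and it takes a genuinely different route from the paper.

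\textbf{Forward direction.} The paper never computes the propriety gap. It shifts so that $T\le 0$ and assumes $T$ bounded below. It then writes $T=\int_{-\infty}^0 T_{B_t}\,dt$ with $T_{B_t}=-1_{B_t}$ over the level sets $B_t=\{T\le t\}$. It computes $F_{B_t}$ in closed form (a step function), checks propriety of each threshold rule by hand, and superposes them by Tonelli. Your identity $\Delta(p,q)=(1-p)\int_q^p \frac{T(u)-T(q)}{(1-u)^2}\,du$ checks out. Since $\Delta$ is linear in $T$, it replaces all of this with one computation and gives a quantitative form of the gap. It also covers $T$ unbounded below as long as $T$ is integrable at $0$, a case the paper's write-up excludes. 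It yields monotonicity of $F$ as well, but cite the right instance. Use $\Delta(x,y)\ge 0$ for $x<y$, which gives $F(y)-F(x)\le -\frac{x}{1-x}\bigl(T(y)-T(x)\bigr)\le 0$; the inequality $\Delta(0,q)\ge 0$ alone only gives $F(0)\ge F(q)$.

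\textbf{Converse.} The paper gets interior uniqueness (Lemma~1) from three ingredients: the Gneiting--Raftery convex/subgradient representation, absolute continuity of convex functions, and an ODE argument. You observe instead that the two one-sided propriety inequalities are exactly left- and right-tagged Riemann--Stieltjes sums of $-\frac{u}{1-u}$ against the monotone $T$, and then integrate by parts. This is self-contained, avoids that machinery, and explains where the formula comes from, namely $dF=-\frac{u}{1-u}\,dT$. What the paper's route buys in exchange is the structural picture of every proper rule as a mixture of elementary threshold rules. The analysis at the endpoint $1$ is essentially the same in both.

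\textbf{The endpoint $0$.} Your caution here is well founded, and it reflects a defect in the statement rather than in your method. If $\int_0^{1/2}T=-\infty$ (e.g.\ $T(p)=-1/p$, $T(0)=-\infty$), the formula gives $F(0)=+\infty$. In that case no proper partner exists at all, since propriety at $p=0$ forces $F(0)\ge\sup_{(0,1)}F=+\infty$.

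In the converse, if $T(0)=-\infty$, the lower half of your sandwich at $p=0$ is vacuous and $F(0)$ is genuinely undetermined. Take $T(p)=\log p-1$ (so $T(0)=-\infty$), $F(p)=\log(1-p)-1$ on $(0,1)$, $F(1)=-\infty$ and $F(0)=-1/2$. This rule is proper, although $\lim_{x\to 0+}F(x)=-1$. So your step ``the value at $0$ follows by letting $x\to 0+$ in the sandwich'' is valid exactly when $T(0)$ is finite. The paper has the same blind spot, since it assumes $T$ bounded below and asserts Lemma~1 on all of $[0,1)$.

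\textbf{The constant $c$.} Suppose $T$ is continuous at $1$, $\lim_{x\to 1-}F(x)$ is finite, and $F(1)=-\infty$. Then your $c$ is $+\infty$, so $c$ has to be allowed to range over $[0,\infty]$. The paper shares this issue too.
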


A pair of functions $(T,F)$ is a proper scoring rule if and only if $(F^*,T^*)$ is a proper scoring rule,
where $g^*(x)=g(1-x)$. Thus:

\begin{cor}\label{cor:F}
Let $F:[0,1]\to[-\infty,\infty)$ be monotonic non-increasing and finite on $(0,1)$.
Fix any real constant $C$ and any $c\ge 0$. Let
$$
	T(x) = C-\frac{(1-x)F(x)}{x}+\int_{x}^{1/2} \frac{F(u)}{(u)^2} \, du,
$$
for $x>0$. If $F$ is continuous at $0$, let $T(0)=-c+\lim_{x\to 0+} T(x)$, and otherwise
let $T(0)=-\infty$. Then the limit is defined if $F$ is continuous at $1$, and in any case $(T,F)$ is proper.

Conversely, any function $T$ on $[0,1]$ that is finite except perhaps at $1$ and such that
$(T,F)$ is proper satisfies the above definition for some real $C$ and $c\ge 0$. 
\end{cor}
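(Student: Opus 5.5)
The plan is to derive the corollary from Theorem~\ref{thm:repr} using the reflection $g^*(x)=g(1-x)$; no new analysis should be needed.

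First I will check the symmetry claim stated before the corollary. Write out the propriety inequality for $(F^*,T^*)$ at $p,q$:
$$
pF(1-p)+(1-p)T(1-p)\ge pF(1-q)+(1-p)T(1-q).
$$
Substituting $p'=1-p$ and $q'=1-q$ turns this into the propriety inequality for $(T,F)$ at $p',q'$. The convention $0\cdot(-\infty)=0$ is untouched, since the coefficients are only permuted. So the two pairs are proper together.

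Next, the forward direction. Given $F$ non-increasing and finite on $(0,1)$, put $T_0=F^*$. Then $T_0$ is non-decreasing, finite on $(0,1)$, and continuous at $1$ exactly when $F$ is continuous at $0$. Apply Theorem~\ref{thm:repr} to $T_0$ with the same $C$ and $c$; this gives $F_0$ with $(T_0,F_0)$ proper. By the symmetry, $(F_0^*,T_0^*)=(F_0^*,F)$ is proper, so it suffices to show that $F_0^*$ is the $T$ of the corollary.

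This is a change of variables. For $x>0$,
$$
F_0(1-x)=C-\frac{(1-x)F(x)}{x}+\int_{1/2}^{1-x}\frac{F(1-u)}{(1-u)^2}\,du .
$$
Substituting $v=1-u$ carries the limits $1/2,\,1-x$ to $1/2,\,x$ and introduces a sign from $du=-dv$. The integral therefore becomes $\int_x^{1/2}F(v)v^{-2}\,dv$, which is the stated formula. At the endpoint, $T(0)=F_0(1)$. This equals $-c+\lim_{y\to1-}F_0(y)=-c+\lim_{x\to0+}T(x)$ when $F$ is continuous at $0$, and $-\infty$ otherwise; the limit exists by the theorem.

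The converse runs the same way. Suppose $(T,F)$ is proper with $T$ finite except perhaps at $0$. Then $(F^*,T^*)$ is proper, and $T^*$ is finite except perhaps at $1$. The converse part of Theorem~\ref{thm:repr}, applied to the non-decreasing function $F^*$, gives $T^*=F_0$ for some real $C$ and $c\ge0$. Hence $T=F_0^*$, which by the computation above has the stated form.

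The only real obstacle is bookkeeping. The statement as printed says ``continuous at $1$'' and ``finite except perhaps at $1$''; under the reflection these must be read as ``at $0$''. I would note this explicitly and otherwise just track the sign in the substitution.
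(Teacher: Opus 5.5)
Your proposal is correct and follows the paper's own route: the paper obtains this corollary from Theorem~\ref{thm:repr} purely via the reflection $g^*(x)=g(1-x)$, and your substitution $v=1-u$ and endpoint bookkeeping (including reading the misprinted ``at $1$'' as ``at $0$'') make that one-line justification explicit. One caveat comes from the cited theorem rather than from your argument: the reduction is only as strong as Theorem~\ref{thm:repr}, whose proof in the paper handles only $T$ bounded below, which after reflection means $F$ bounded below.
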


\begin{cor}
Any monotonic non-decreasing function $T$ from $[0,1]$ to $[-\infty,\infty)$ that's finite on $(0,1)$ is the first 
element of a proper scoring rule $(T,F)$. Any monotonic non-increasing function $F$ from $[0,1]$ to 
$[-\infty,\infty)$ that's finite on $(0,1)$ is the second element of a proper scoring rule $(T,F)$.
\end{cor}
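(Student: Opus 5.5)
This corollary follows directly from Theorem~\ref{thm:repr} and Corollary~\ref{cor:F}. Each of those results only requires monotonicity, finiteness on $(0,1)$, and a choice of constants; it then produces a partner function that makes the pair proper. So the plan is to instantiate them with a concrete choice of constants and check that the resulting pair fits the paper's definition of a proper scoring rule.

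\emph{First claim.} Let $T$ be non-decreasing from $[0,1]$ to $[-\infty,\infty)$ and finite on $(0,1)$. Apply Theorem~\ref{thm:repr} with, say, $C=0$ and $c=0$, and define $F$ by the formula there, with the stated convention for $F(1)$. The theorem guarantees that $F$ is well defined and that $(T,F)$ satisfies the propriety inequality.

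Three routine checks remain.
\begin{enumerate}
\item $F$ is finite on $(0,1)$. This holds because $T$ is bounded on compact subintervals of $(0,1)$ by monotonicity, so the integral from $1/2$ to $x$ is finite, and so is the term $xT(x)/(1-x)$.
\item $F(0)$ is finite, even if $T(0)=-\infty$. At $x=0$ the term $xT(x)/(1-x)$ is $0\cdot T(0)=0$ under the convention $0\cdot(-\infty)=0$. The integral $\int_{1/2}^0 T(u)(1-u)^{-2}\,du$ is finite because $T$ is monotone on $(0,1/2]$, hence bounded on that interval except possibly at the endpoint $0$, a null set, and integrable there. (If $T$ is unbounded below near $0$, integrability needs a separate argument.)
\item Adjust the values so they lie in $[-\infty,0)$ as the definition requires. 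If $T$ and $F$ are bounded above, subtract a common constant $K$ from both. Adding the same constant to $T$ and $F$ preserves the propriety inequality, since both sides shift by $pK+(1-p)K=K$. So a sufficiently large $K$ makes both functions negative wherever they are finite.
\end{enumerate}

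The main obstacle is a mismatch between the hypotheses and the definition. The corollary allows $T$ with values in $[-\infty,\infty)$, while the definition of a proper scoring rule demands values in $[-\infty,0)$. Monotonicity handles most of this. $T$ is bounded above by $T(1)<\infty$. $F$ is non-increasing, being the second component of a proper pair by the cited Schervish fact, so $F$ is bounded above by $F(0)$, which is finite by check 2. Hence a single finite shift $K>\max(T(1),F(0))$ works. If one reads ``proper scoring rule'' merely as the propriety inequality, no shift is needed.

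\emph{Second claim.} This is identical, using Corollary~\ref{cor:F}, or equivalently the duality $(T,F)\mapsto(F^*,T^*)$ applied to the first claim with $g^*(x)=g(1-x)$.
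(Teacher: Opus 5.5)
Your reduction to Theorem~\ref{thm:repr} with $C=c=0$ (and to Corollary~\ref{cor:F}, or the duality $g^*(x)=g(1-x)$, for the second claim) is exactly how the paper means this corollary to follow. The genuine gap is your check~2, at the point you flagged in parentheses.

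A non-decreasing $T$ on $(0,1/2]$ is bounded above by $T(1/2)$, but it need not be bounded below. Blow-up as $u\to0^+$ is not a null-set issue. If $\int_0^{1/2}T(u)\,du=-\infty$, the formula gives $F(0)=-\int_0^{1/2}T(u)(1-u)^{-2}\,du=+\infty$. Then the propriety inequality at $p\in(0,1)$, $x=0$ contains $p\cdot(-\infty)+(1-p)\cdot(+\infty)$, which is undefined.

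No separate argument can fix this, because the corollary itself fails there. Take $T(x)=-1/x$ on $(0,1]$ and $T(0)=-\infty$, and suppose $(T,F)$ is proper with $F$ finite on $(0,1)$.
\begin{itemize}
\item Propriety with true probability $q$ and report $p$, for $0<q<p<1$, gives $F(q)-F(p)\ge \frac{q}{1-q}\bigl(T(p)-T(q)\bigr)$.
\item For $q=2^{-n-1}$, $p=2^{-n}$ the right side is at least $1/2$. Summing, $F(2^{-N})\ge F(1/2)+(N-1)/2\to\infty$.
\item Propriety at true probability $0$ gives $F(0)\ge F(q)$ for all $q$, using $0\cdot(-\infty)=0$. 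This forces $F(0)=+\infty$, which is not allowed.
\end{itemize}
The paper's proof of Theorem~\ref{thm:repr} explicitly treats only $T$ bounded below, so it does not cover this case either. The correct statement needs the extra hypothesis $\int_0^{1/2}T(u)\,du>-\infty$ for the first claim. Dually, the second claim needs $\int_{1/2}^1F(u)\,du>-\infty$. With that hypothesis your check~2 is immediate, since $(1-u)^{-2}\in[1,4]$ on $[0,1/2]$.

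Your check~3 should simply be dropped. Replacing $(T,F)$ by $(T-K,F-K)$ changes the first component, so it does not show that $T$ itself is the first element of a proper rule. If the codomain $[-\infty,0)$ were enforced literally, the corollary would already be false for $T\equiv 1$. The only coherent reading is the one Theorem~\ref{thm:repr} uses, where ``proper'' means the propriety inequality for $[-\infty,\infty)$-valued functions. Under that reading no shift is needed, but the integrability hypothesis above still is.
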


\begin{cor}
If $(T_1,F_1)$ and $(T_2,F_2)$ are proper scoring rules that are continuous at $0$ and $1$, then 
$(T_1-T_2,F_1-F_2)$ is a proper scoring rule if and only if either $T_1-T_2$ is non-decreasing or
$F_1-F_2$ is non-increasing.
\end{cor}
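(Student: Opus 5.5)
The plan is to use Theorem~\ref{thm:repr} and Corollary~\ref{cor:F} and exploit the fact that the representation there is \emph{linear} in $T$ (resp.\ $F$). First I will dispose of the ``only if'' direction. If $(T_1-T_2,F_1-F_2)$ is proper, then the monotonicity fact quoted at the start of the paper~\cite{Schervish} shows that $T_1-T_2$ is non-decreasing, and also that $F_1-F_2$ is non-increasing. So in fact both alternatives hold.

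For the ``if'' direction, suppose first that $T:=T_1-T_2$ is non-decreasing. I will tacitly assume that the difference is well defined, i.e.\ we never subtract $-\infty$ from $-\infty$ at an endpoint; this is implicit in the statement. Then $T$ is monotone and finite on $(0,1)$, so the forward half of Theorem~\ref{thm:repr} applies to it. Next I apply the converse half of Theorem~\ref{thm:repr} to each proper rule $(T_i,F_i)$. This gives constants $C_i$ and $c_i\ge 0$ with
$$F_i(x)=C_i-\frac{xT_i(x)}{1-x}+\int_{1/2}^x\frac{T_i(u)}{(1-u)^2}\,du$$
for $x<1$, and $F_i(1)=-c_i+\lim_{x\to1-}F_i(x)$, since $T_i$ is continuous at $1$. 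Subtracting, and using linearity of the integral (both integrals are finite for $x<1$), gives
$$F_1(x)-F_2(x)=(C_1-C_2)-\frac{xT(x)}{1-x}+\int_{1/2}^x\frac{T(u)}{(1-u)^2}\,du$$
for $x<1$. At $x=0$ the term $xT(x)/(1-x)$ vanishes by the convention $0\cdot(-\infty)=0$, so the identity holds there as well.

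The step needing care is the endpoint $1$, and it is where the continuity hypothesis is used. The difference $c_1-c_2$ could be negative, and then it would not be an admissible constant in Theorem~\ref{thm:repr}. But each $F_i$ is assumed continuous at $1$, which forces $c_i=0$. Hence $F_1(1)-F_2(1)=\lim_{x\to1-}(F_1-F_2)(x)$, which is exactly the definition in Theorem~\ref{thm:repr} with constants $C=C_1-C_2$ and $c=0$. Moreover $T$ is continuous at $1$, as a difference of functions continuous there, so the theorem uses this limit branch rather than $F(1)=-\infty$. The forward half of Theorem~\ref{thm:repr} then says $(T,F_1-F_2)$ is proper.

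The case where $F_1-F_2$ is non-increasing is symmetric. I will run the same argument with Corollary~\ref{cor:F} in place of Theorem~\ref{thm:repr}, using continuity of $T_i$ at $0$ to kill the constants $c_i$ there. Alternatively, I can apply the previous case to the swapped rules $(F_i^*,T_i^*)$, using the stated equivalence between properness of $(T,F)$ and of $(F^*,T^*)$.
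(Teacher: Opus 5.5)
Your proof is correct and is essentially the argument the paper leaves implicit, since it gives no separate proof: the representation in Theorem~\ref{thm:repr} is linear in $T$, and continuity of the $T_i$ and $F_i$ at $1$ lets you take $c_1=c_2=0$, so the difference again fits the theorem with $c=0$; the other alternative then follows from Corollary~\ref{cor:F} or from the symmetry $(T,F)\mapsto(F^*,T^*)$. Note only that, like the paper in Theorem~\ref{thm:repr}, you are reading ``proper'' as the propriety inequality alone, because $T_1-T_2$ and $F_1-F_2$ need not take values in $[-\infty,0)$.
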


The following lemma is likely folklore.

\begin{lem}\label{lem:unique}
Suppose $(T,F)$ and $(T,K)$ are both proper. Then $F$ and $K$ differ by a constant on $[0,1)$.
\end{lem}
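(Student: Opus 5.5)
Set $D=F-K$ on $[0,1)$; we want $D$ to be constant there. The plan is to play the propriety inequalities for $(T,F)$ and $(T,K)$ against each other, so that the $T$ terms cancel and we get two-sided bounds on the increments of $D$. Since $F$ and $K$ are finite off $1$ (and both pairs are proper, with $T$ finite on $(0,1)$), this manipulation is legitimate for $p,q\in(0,1)$. The endpoint $0$ is handled separately.

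For $p,q\in(0,1)$, propriety of $(T,F)$ at $p$ against $q$ gives
$$p\bigl(T(p)-T(q)\bigr)\ge (1-p)\bigl(F(q)-F(p)\bigr).$$
Propriety of $(T,K)$ at $q$ against $p$ gives
$$q\bigl(T(q)-T(p)\bigr)\ge (1-q)\bigl(K(p)-K(q)\bigr).$$
Write $\Delta T=T(q)-T(p)$, $\Delta F=F(q)-F(p)$ and $\Delta K=K(q)-K(p)$. The two inequalities become
$$(1-p)\Delta F\le p\,\Delta T \qquad\text{and}\qquad q\,\Delta T\ge -(1-q)\Delta K.$$
The same two inequalities hold with the roles of $F$ and $K$ exchanged.

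We use these to bound $\Delta F$ against $\Delta K$, up to error terms of size $|q-p|\cdot|\Delta T|$ or $|q-p|\cdot|\Delta F|$.

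Step 1 (the local bound). Take $p<q$ in a compact interval $[a,b]\subset(0,1)$. Propriety of each rule, at $p$ against $q$ and at $q$ against $p$, sandwiches $\Delta T$:
$$\frac{1-q}{q}\,(-\Delta F)\le \Delta T\le \frac{1-p}{p}\,(-\Delta F).$$
The same sandwich holds with $K$ in place of $F$. Hence $-\Delta F$ and $-\Delta K$ both lie in the interval
$$\Bigl[\tfrac{p}{1-p}\Delta T,\ \tfrac{q}{1-q}\Delta T\Bigr].$$
Its length is $\bigl(\tfrac{q}{1-q}-\tfrac{p}{1-p}\bigr)\Delta T\le L\,(q-p)\,\Delta T$, with $L$ depending only on $[a,b]$. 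Therefore
$$|\Delta D|\le L\,(q-p)\,\bigl(T(q)-T(p)\bigr).$$

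Step 2 (summing over a partition). Partition $[x,y]\subset[a,b]$ into $n$ equal pieces and sum the bound from Step 1. Since $T$ is monotone, the increments $T(q)-T(p)$ telescope, so
$$|D(y)-D(x)|\le L\,\frac{y-x}{n}\,\bigl(T(b)-T(a)\bigr).$$
This uses that $T$ is finite on $(0,1)$. Letting $n\to\infty$ gives $D(y)=D(x)$, so $D$ is constant on $(0,1)$.

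Step 3 (the endpoint $0$). This is the main obstacle. The bound $\tfrac{p}{1-p}\le \tfrac{q}{1-q}$ at $p=0$ still works, but $T(0)$ may be $-\infty$. Note that propriety at $p=0$ just says $F(0)\ge F(q)$, and similarly for $K$, so it alone does not pin down $F(0)$; a better approach is needed. Use propriety at small $q$ against the report $0$:
$$qT(q)+(1-q)F(q)\ge qT(0)+(1-q)F(0).$$
If $T(0)=-\infty$ this is vacuous, so the plan is to use the sandwich in Step 1 with $p=0$ instead. The lower bound gives $-\Delta F\ge 0$, and the upper bound gives $-\Delta F\le \tfrac{q}{1-q}\bigl(T(q)-T(0)\bigr)$.

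If $T(0)$ is finite, letting $q\to0$ shows that $F(q)\to F(0)$, and likewise $K(q)\to K(0)$. The constant value of $D$ on $(0,1)$ then extends to $0$.

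If $T(0)=-\infty$, the propriety inequality at $p=0$ with report $0$ involves $0\cdot(-\infty)=0$. In this case I would try to show directly that $F(0)=\lim_{q\to0^+}F(q)$ using propriety at $p=0$ against reports $q$. If that fails, I would fall back on the observation that the statement only needs $[0,1)$, and examine whether the intended reading tolerates this case.
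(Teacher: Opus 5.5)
\textbf{The gap.} You leave the case $T(0)=-\infty$ open, and it cannot be closed, because the lemma is false there. When $T(0)=-\infty$, the value $F(0)$ is constrained by propriety only as follows:
\begin{itemize}
\item with report $0$ and $p\in(0,1)$, the right-hand side is $-\infty$;
\item with $p=1$, $F(0)$ carries weight $0$;
\item with $p=0$, the only requirement is $F(0)\ge F(q)$.
\end{itemize}
So raising $F(0)$ never destroys propriety, and your planned direct argument that $F(0)=\lim_{q\to0^+}F(q)$ must fail.

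Concretely, let $T(x)=\log x-1$ and $F(x)=\log(1-x)-1$ (a shifted logarithmic score), and let $K=F$ except $K(0)=-\tfrac12$. Then $(T,K)$ is proper by the observations above, since $K(q)\le -1<K(0)$ for $q>0$. Yet $K-F$ equals $\tfrac12$ at $0$ and $0$ on $(0,1)$. The correct statement is therefore exactly what your Steps 1--3 prove: $F-K$ is constant on $(0,1)$, and on $[0,1)$ when $T(0)>-\infty$. Rather than leaving the case pending, you should state this restriction and give the counterexample.

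The paper's own argument has the same limitation. Its identity $\Delta(x)=\int_{1/2}^x\Delta'$ is asserted only for $x\in(0,1)$. A convex $G$ with $G'(0)=-\infty$ can jump upward at $0$, which is exactly what happens in this example.

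\textbf{What is correct.} Steps 1 and 2 are correct, and your treatment of $x=0$ when $T(0)>-\infty$ is also correct. One small slip: in your preliminary display the first inequality should read $(1-p)\Delta F\le -p\,\Delta T$. It is not used later, and Step 1 derives the sandwich correctly.

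\textbf{Comparison with the paper.} Steps 1 and 2 take a genuinely different route. The paper goes through the convex representation $T=G+(1-x)G'$, $F=G-xG'$. It shifts the second convex function by a linear term so that $H(1/2)=G(1/2)$, and uses absolute continuity of convex functions to get $\Delta+(1-x)\Delta'=0$ almost everywhere for $\Delta=G-H$. It then solves this differential equation to conclude $\Delta\equiv0$ on $(0,1)$.

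Your argument applies the two-sided sandwich $\frac{p}{1-p}\,\Delta T\le-\Delta F\le\frac{q}{1-q}\,\Delta T$ to both $F$ and $K$, and sums over a fine partition so that the $T$-increments telescope. It needs no representation theorem, no absolute continuity and no differential equation, only finiteness of $T,F,K$ on $(0,1)$. It is also more informative. The same sandwich shows $F(x)-F(y)=\int_x^y\frac{u}{1-u}\,dT(u)$ as a Riemann--Stieltjes integral for $0<x<y<1$, and after integration by parts this is exactly the paper's representation formula on $(0,1)$.
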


\begin{proof}
By the convexity and subgradient representation, there is a a convex function $G$
on $[0,1]$ with a subgradient $G'$ which is real-valued, except perhaps at $0$ and $1$, such that
$$
	T(x)=G(x)+(1-x)G'(x)
$$
and
$$
	F(x)=G(x)-x G'(x).
$$
Then there is also a convex $H$ with a subgradient $H'$ (real except perhaps at endpoints) such that 
$T(x)=H(x)+(1-x)H'(x)$ and $K(x)=H(x)-xH'(x)$. 
Note that if $H_1(x)=H(x)+c(1-x)$, then $H_1'(x)=H'(x)-c$ is a subgradient of $H_1$, and
$$
\begin{aligned}
	H_1(x)+(1-x)H_1'(x)&=H(x)+c(1-x)+(1-x)(H'(x)-c)\\
		&=H(x)+(1-x)H'(x)=T(x)
\end{aligned}	
$$
while
$$
	H_1(x)-xH_1'(x)=H(x)+c(1-x)-x(H'(x)-c)=H(x)-xH'(x)+c=K(x)+c.
$$
Choosing $c$ such that $H(1/2)+(1/2)c=G(1/2)$, and replacing $K(x)$ with
$K(x)+c$, $H(x)$ with $H(x)+c(1-x)$ and $H'(x)$ with $H'(x)-c$, we can assume
that $H(1/2)=G(1/2)$.
Then for all $x\in (0,1)$:
$$
	\Delta(x) = \int_{1/2}^x \Delta'(x) \, dx,
$$
since convex functions are absolutely continuous on compact intervals and hence equal to the
Lebesgue integral of their derivatives, which when defined are equal to the subgradient. Moreover,
$$
	\Delta(x)+(1-x)\Delta'(x)=0
$$
almost everywhere since $G(x)+(1-x)G'(x)=H(x)+(1-x)H'(x)$. Thus:
$$
	\Delta(x) = \Delta(1/2)+\int_{1/2}^x \frac{\Delta(x)}{1-x} \, dx.
$$
Since $\Delta(x)$ is continuous, the right-hand side is differentiable in $[0,1)$, and hence we have
$$
	\Delta'(x) = \frac{\Delta(x)}{1-x}
$$
everywhere on $[0,1)$ and so
$$
-\Delta(x)+(1-x)\Delta'(x)=0. 
$$
Now, the derivative of $(1-x)\Delta(x)$ is $-\Delta(x)+(1-x)\Delta'(x)=0$, so $(1-x)\Delta(x)$ 
is constant on $[0,1)$. Since $(1-x)\Delta(x)=0$ at $x=1/2$, it must be equal to $0$ everywhere.\footnote{I am 
grateful to Gemini Pro for some of the main ideas in this proof.}
\end{proof}

\begin{proof}[Proof of Theorem~\ref{thm:repr}]
First consider the case where $T(x)$ is an indicator function of an interval $A \subseteq (0,1]$ containing
$1$, where $1_A(x)$ is $1$ if
$x\in A$ and $0$ if $x\notin A$. Let:
$$
	G_A(x) = -\frac{xT(x)}{1-x}+\int_0^x \frac{T(u)}{(1-u)^2} \, du
$$
for $x<1$. For $x \in (0,1)-A$ we then have $F_A(x)=0$. For $x \in A\cap (0,1)$, we have $x\ge a$ and:
$$
\begin{aligned}
	G_A(x) &= -\frac{x}{1-x}+\int_a^x (1-u)^{-2} \,du \\
		 &= -\frac{x}{1-x}+\frac{1}{1-x}-\frac{1}{1-a} \\
		 &= -\frac{a}{1-a}.
\end{aligned}
$$		 
Thus $G_A(x) = -(a/(1-a))\cdot 1_A(x)$ for $x<1$. If $A=\{ 1 \}$, then $F_A$ is discontinuous at $1$,
and let $G_A(1)=-\infty$. Otherwise, $F_A$ is continuous at $1$, and let $-G_A(1)=(a/(1-a))\cdot 1_A(1)$, 
with the convention that this is everywhere $0$ if $A=\varnothing$. It is easy to check that 
$(1_A,G_A)$ is proper.

Now let $T_B = -1_B$ where $B$ is of the form $[0,b]$ and $[0,b)$ for $b\le 1$. Let:
$$
	F_B(x) = -\frac{xT(x)}{1-x}+\int_{1/2}^x \frac{T(u)}{(1-u)^2} \, du
$$
for $x<1$. Let $F_B(1)=-\infty$ if $B=[0,1)$ and otherwise let $F_B(1)=\lim_{x\to 1-} F_B(x)$.
If $b<1$, then $T_B$ differs by a constant from $1_{B^c}$ and $F_B$ by a constant from $G_{B^c}$. Thus
$(T_B,F_B)$ is also proper by what we proved already. If $B=[0,1]$, then $T_B$ and $F_B$ are both constant, and
$(T_B,F_B)$ are proper. If $B=[0,1)$, then $F_B$ is $-\infty$ at $1$ and on $[0,1)$ it's constant. In that
case it is easy to verify the propriety inequality
$$
	p T_B(p) + (1-p) F_B(p) \ge p T_B(x) + (1-p) F_B(x).
$$
For both sides are equal if $p,x<1$. If $p=1$, the inequality is equivalent to $p T_B(p) \ge p T_B(x)$ which
holds by monotonicity. If $x=1$ and $p<1$, the right-hand-side is $-\infty$. So, in all cases we have
$(T_B,F_B)$ proper.

Now given any non-decreasing $T(x)$ bounded below, without loss of generality assume $T$ is non-positive
(subtract a constant if necessary). Let $B_t=\{ x : T(x) \le t \}$ be a level set of $T$. Then $B_t$ is of the form 
$[0,b]$ or $[0,b)$ and:
\begin{equation}\label{eq:Tint}
	T(x) = \int_{-\infty}^0 T_{B_t} \, dt.
\end{equation}
Let $F$ be as in the statement of the Lemma in the special case where $C=c=0$. Thus:
$$
	F(x) = -\frac{xT(x)}{1-x}+\int_{1/2}^x \frac{T(u)}{(1-u)^2} \, du,
$$
and $F(1)$ is $-\infty$ if $T$ is continuous at $1$ and is $\lim_{x\to1-} F(x)$ otherwise.
We will show that $F(T,F)$ is proper.

For $x<1$ we have:
\begin{equation}\label{eq:Fint}
\begin{aligned}
	F(x) &= -\frac{xT(x)}{1-x}+\int_{1/2}^x \frac{T(u)}{(1-u)^2} \, du \\
		 &= -\frac{x}{1-x}\int_{-\infty}^0 T_{B_t} \, dt + \int_{1/2}^x \int_{-\infty}^0 \frac{T_{B_t}}{(1-u)^2} \, dt\, du \\
		 &= \int_{-\infty}^0 \left(-\frac{xT_{B_t}}{1-x}  + \int_{1/2}^x \frac{T_{B_t}}{(1-u)^2} \, du \right) \, dt\\
		 &= \int_{-\infty}^0 F_{B_t}(x) \, dt,
\end{aligned}		 
\end{equation}
where the order of integration was swapped by Tonelli's theorem as $T\le 0$. By propriety of
$(T_{B_t},F_{B_t})$ we have:
$$
	p T_{B_t}(p) + (1-p) F_{B_t}(p) \ge 
	p T_{B_t}(x) + (1-p) F_{B_t}(x),
$$
for $p,x<1$, 
and integrating from $-\infty$ to $0$ with respect to $t$, by \eqref{eq:Tint} and \eqref{eq:Fint} we get
\begin{equation}\label{eq:propriety}
	p T(p) + (1-p) F(p) \ge 
	p T(x) + (1-p) F(x).
\end{equation}

Note that $F_{B_t}$ differs by a constant from $G_{B_t^c}$ on $[0,1)$, and hence is non-increasing on $[0,1)$.
By \eqref{eq:Fint} this is also true of $F$, so it has a limit at $1$. Define $F$ to be that limit if
$T$ is continuous at $1$ and to be $-\infty$ otherwise. In the case where $T$ is continuous at $1$, we get
\eqref{eq:propriety} for $p=1$ and $x<1$ by taking the limit as $p\to 1-$, and for $p<1$ and $x=1$ by
taking the limit as $x\to 1-$. The case of $p=x=1$ is trivial. What remains is the case where $T$ is
not continuous at $1$ and either $p=1$ or $x=1$. If $p=1$, then \eqref{eq:propriety} follows
from the monotonicity of $T$, and if $p<1$ but $x=1$, then \eqref{eq:propriety} is trivial when
$F(1)=-\infty$.

Now fix $C\in\R$ and $c\ge 0$. Let $K(x)=F(x)+C$ for $x<1$ and $K(1)=F(1)+C-c$. I claim that $(F,K)$ is proper.
For \eqref{eq:propriety}
remains true with $K$ in place of $F$ if $p$ and $x$ are in $[0,1)$. If $p=1$, then $(1-p)K(p)=(1-p)F(p)=0$,
and so \eqref{eq:propriety} is also true with $K$ in place of $F$. The remaining case to check where $p<1$ and
$x=1$. By \eqref{eq:propriety} we have:
$$
	p T(p) + (1-p) F(p) \ge p T(1) + (1-p) F(1).
$$
But $F(p)=K(p)-C$ and $F(1)\ge K(1)-C$, so we can indeed put $K$ in place of $F$.

It remains to show that for any $K$ such that $(T,K)$ is proper and $K$ is finite except perhaps at $1$
there are $C$ and $c$ such that $K(x)=F(x)+C$ for $x<1$ and $K(1)=F(1)+C-c$. The $x<1$ part is 
Lemma~\ref{lem:unique}. It remains to show that $K(1)\le F(1)+C$. Suppose first that $T$ is discontinuous
at $1$. By propriety of $(T,K)$, for $p<1$ we have:
$$
	p (T(p)-T(1) \ge (1-p)(K(1)-K(p)).
$$
Thus,
$$
	K(1)-K(p) \le \frac{p}{1-p} (T(p)-T(1)).
$$
Since $T$ is non-decreasing and discontinuous at $1$, the limit of the right hand side as $p\to 1-$ is
$-\infty$, and hence $K(1)=-\infty=F(1)+C$. 

Now suppose $T$ is continuous at $1$, so $K(1)=F(1)+C=C+\lim_{p\to 1-} F(p)$. But
$K(1)\le \lim_{p\to 1-}$ by monotonicity, so $K(1)=F(1)+C-c$ for some positive $c$.
\end{proof}

\end{document}